\newtheorem{thm}{Theorem}[section]
\newtheorem{lemma}[thm]{Lemma}
\theoremstyle{remark}
\newtheorem{example}[thm]{Example}
\newtheorem{remark}[thm]{Remark}
\def\C{\mathbb{C}}
\def\Z{\mathbb{Z}}
\def\P{\mathbb{P}}
\def\N{\mathbb{N}}
\def\R{\mathbb{R}}
\newcommand{\conv}{\mathop{\mathrm{conv}}}
\def\const{{\rm const}}
\def\Oc{\mathcal{O}}
\def\om{\omega}
\def\l{\lambda}
\def\s{\tau}
\def\wk{\underline{w_k}}
\def\w0{\underline{w_0}}
\def\id{\rm id}
\title{Newton--Okounkov polytopes of flag varieties}
\author{Valentina Kiritchenko}
\email{vkiritch@hse.ru}
\thanks{The research was carried out at the IITP RAS at the expense of the Russian
Foundation for Sciences (project 14-50-00150).}
\address{Laboratory of Algebraic Geometry and Faculty of Mathematics\\
National Research University Higher School of Economics\\
Vavilova St. 7, 112312 Moscow, Russia}
\address{Institute for Information Transmission Problems, Moscow, Russia}
\subjclass[2010]{14M15, 52B20, 05E10}
\date{}
\keywords{Newton--Okounkov polytope, flag variety, Bott--Samelson resolution, fiber polytope}
\begin{document}
\begin{abstract}
We compute the Newton--Okounkov bodies of line bundles on the complete flag variety of $GL_n$ for
a geometric valuation coming from a flag of translated Schubert subvarieties.
The Schubert subvarieties correspond to the terminal subwords in the decomposition
$(s_1)(s_2s_1)(s_3s_2s_1)(\ldots)(s_{n-1}\ldots s_1)$
of the longest element in the Weyl group.
The resulting Newton--Okounkov bodies coincide with the Feigin--Fourier--Littelmann--Vinberg
polytopes in type $A$.
\end{abstract}
\maketitle

\section{Introduction}
Newton--Okounkov convex bodies generalize Newton polytopes from toric geometry to a more general
algebro-geometric as well as representation-theoretic setting.
In particular, Newton--Okounkov bodies of flag varieties and of Bott--Samelson resolutions for
different valuations have recently attracted much interest due to connections with representation
theory and Schubert calculus.
The Newton--Okounkov body can be assigned to a line bundle on an algebraic variety $X$
\cite{KK,LM}.
In contrast with Newton polytopes, Newton--Okounkov bodies depend heavily on a choice of a
valuation on the field of rational functions $\C(X)$.
In the case of flag varieties, it is especially interesting to consider various geometric
valuations, namely, valuations coming from a complete flag of subvarieties
${pt}=Y_d\subset \ldots \subset Y_1\subset Y_0=X$, where $d:=\dim X$, since the resulting
Newton--Okounkov convex bodies can often be identified with polytopes that arise in representation
theory.

The first explicit computation of Newton--Okounkov polytopes of flag varieties is due
to Okounkov \cite{O}.
For a geometric valuation, he identified Newton--Okounkov polytopes of symplectic flag varieties
with symplectic Gelfand--Zetlin polytopes.
Since then several other computations were made for different valuations
\cite{An,Fu,FFL14,HY,Ka,K14}, see also \cite{An15,FK,SchS} for related results.
In the present paper, we use a natural geometric valuation introduced by Anderson in
\cite[Section 6.4]{An}
who computed an example for $GL_3$.
In this example, the Newton--Okounkov polytope was identified with the 3-dimensional Gelfand--Zetlin
polytope.

Let $X$ be the complete flag variety for $GL_n(\C)$.
We compute Newton--Okounkov convex bodies of semiample line bundles on $X$
for the geometric valuation coming from the flag of translated
Schubert subvarieties
$$w_0X_{\id}\subset w_0w_{d-1}^{-1}X_{w_{d-1}}\subset w_0w_{d-2}^{-1}X_{w_{d-2}}
\subset\ldots\subset w_0w_{1}^{-1}X_{w_{1}}\subset X,$$
where $w_1$, $w_2$,\ldots, $w_{d-1}$ are terminal
subwords of the decomposition
$$(s_1)(s_2s_1)(s_3s_2s_1)(\ldots)(s_{n-1}\ldots s_1)$$
of the longest element in $S_n$ (see Section \ref{ss.val} for a precise definition).
The valuation can be alternatively described as the lowest term valuation associated with a natural
coordinate system on the open Schubert cell in $X$ (see Section \ref{ss.coord}).
The computation is based on simple algebro-geometric and convex-geometric arguments.
The only representation-theoretic input is the well-known fact that the number of integer points
in the Gelfand--Zetlin polytope for a dominant weight $\lambda$ is equal to the dimension of the
irreducible representation of $GL_n$ with the highest weight $\l$.

Surprisingly, the resulting polytopes for $n>3$ are not, in general, combinatorially equivalent
to the Gelfand--Zetlin polytopes and coincide instead with Feigin--Fourier--Littelmann--Vinberg
polytopes in type $A$.
The complete list of cases when Feigin--Fourier--Littelmann--Vinberg
polytopes in type $A$ are combinatorially equivalent to the Gelfand--Zetlin polytopes can be found
in \cite{Fo}.
Though Feigin--Fourier--Littelmann--Vinberg polytopes can also be defined in type $C$
an analogous result for Newton--Okounkov polytopes does not hold already for $Sp_4(\C)$
(see Section \ref{ss.C} for more details).
In both types $A$ and $C$, Feigin--Fourier--Littelmann--Vinberg polytopes were earlier
obtained as Newton--Okounkov bodies for a completely different
valuation that does not come from any decomposition of the longest element
(see \cite[Examples 8.1,8.2]{FFL14}).
The fact that valuations considered in \cite{FFL14} and in the present paper
yield the same Newton--Okounkov polytopes served as the starting point for the recent preprint
\cite{FFL15}, which gives a conceptual explanation for this coincidence
(see \cite[Example 17]{FFL15}).

The paper is organized as follows.
In Section \ref{s.main},  we define the valuation, formulate the main result and consider several
examples.
Section \ref{s.proof} contains the proof of the main theorem modulo the result
on comparison between the Gelfand--Zetlin and Feigin--Fourier--Littelmann--Vinberg polytopes.
The latter result is explained in Section \ref{s.comparison} using purely convex-geometric
arguments.

I am grateful to Alexander Esterov, Evgeny Feigin and Evgeny Smirnov for useful discussions.
I would also like to thank the referee for valuable comments.

\section{Main result}\label{s.main}
In this section, we define the valuation on $\C(X)$, recall the inequalities
defining Feigin--Fourier--Littelmann--Vinberg polytopes and formulate the main
theorem.
We also define a geometrically natural coordinate system on the open Schubert cell and use it
do the simplest examples by hand.
Finally, we discuss the case of symplectic flag varieties.
\subsection{Valuation}\label{ss.val}
Fix the decomposition  $\w0=(s_1)(s_2s_1)(s_3s_2s_1)\ldots(s_{n-1}\ldots s_1)$ of the longest
element $w_0\in S_n$.
Here $s_i:=(i~i+1)$ is the $i$-th elementary transposition.
Denote by $d:=\binom{n}{2}$ the length of $w_0$.

Fix a complete flag of subspaces
$F^\bullet:=(F^1\subset F^2\subset\ldots\subset F^{n-1}\subset \C^n)$ (this amounts to
fixing a Borel subgroup $B\subset GL_n$).
In what follows, $\wk$ for $k=1$,\ldots, $d$ denotes the subword of $\w0$ obtained by deleting the first
$k$ simple reflections in $\w0$, and $w_k$ denotes the corresponding element of $S_n$.
Consider the flag of translated Schubert subvarieties:
$$w_0X_{\id}\subset w_0w_{d-1}^{-1}X_{w_{d-1}}\subset w_0w_{d-2}^{-1}X_{w_{d-2}}
\subset\ldots\subset w_0w_{1}^{-1}X_{w_{1}}\subset GL_n/B,\eqno (*)$$
where Schubert subvarieties are taken with respect to the flag $F^\bullet$, i.e.,
$X_w=\overline{BwB/B}$
(cf. \cite[Section 6.4]{An} and \cite[Remark 2.3]{Ka}).
Let $y_1$, \ldots, $y_d$ be coordinates on the open Schubert cell $C$ (with respect to $F^\bullet$)
that are compatible with $(*)$, i.e.,
$w_0w_{k}^{-1}X_{w_k}\cap C=\{y_1=\ldots=y_k=0\}$.
A possible choice of such coordinates is described in Section \ref{ss.coord}.

Fix the lexicographic ordering on monomials in coordinates $y_1$, \ldots, $y_d$, i.e.,
$y_1^{k_1}\cdots y^{k_d}\succ y^{l_1}\cdots y^{l_d}$ iff
there exists $j\le d$ such that $k_i=l_i$ for $i<j$ and $k_j>l_j$.
Let $v$ denote the lowest order term valuation on $\C(X_{\w0})=\C(GL_n/B)$
associated with these coordinates and ordering.
Let $L_\l$ be the line bundle on $GL_n/B$ corresponding to a dominant weight
$\l:=(\l_1,\ldots,\l_n)\in\Z^n$ of $GL_n$ ({\em dominant} means that $\l_1\ge\l_2\ge\ldots\ge \l_n$).
Recall that the bundle $L_\l$ is semiample iff $\l$ is dominant and very ample iff $\l$ is strictly dominant, i.e.,
$\l_1>\l_2>\ldots> \l_n$.
Denote by $\Delta_v(GL_n/B,L_\l)\subset\R^d$
the Newton--Okounkov convex body corresponding to $GL_n/B$,
$L_\l$ and $v$ (see \cite{KK,LM} for a definition of Newton--Okounkov convex bodies).

\begin{thm} \label{t.main}
The Newton--Okounkov convex body $\Delta_v(GL_n/B,L_\l)$ coincides with the
Feigin--Fourier--Littelmann--Vinberg polytope
$FFLV(\l)$.
\end{thm}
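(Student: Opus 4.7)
My plan is to prove Theorem \ref{t.main} by a sandwich argument: first establish the inclusion $\Delta_v(GL_n/B,L_\l) \subseteq FFLV(\l)$, and then check that the two convex bodies have the same volume, so that equality follows automatically from the convexity of $\Delta_v$.

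\textbf{Step 1: containment (the main obstacle).} The coordinates $y_1,\ldots,y_d$ on the open cell $C$ are chosen compatibly with the flag $(*)$, so $y_1$ locally defines the codimension-one subvariety $w_0 w_1^{-1} X_{w_1}$. Since the ordering is lexicographic with $y_1$ first, the first coordinate of $v(f)$ records $\mathrm{ord}_{y_1}(f)$, and after dividing out the appropriate power of $y_1$ the residual valuation is again a lowest-term valuation on the next divisor, built from the tail subword $\wk$. I would induct along the flag $(*)$, at each step identifying the induced valuation on the next divisor with an instance of the same construction on a smaller subvariety (ultimately bottoming out at the point $w_0 X_{\id}$). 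The FFLV polytope admits an analogous slicing: its defining Dyck-path inequalities can be partitioned according to whether a Dyck path uses the coordinate $y_1$, and this partition should match the peeling of the staircase decomposition $\w0$ one reflection at a time. The delicate point, and the true algebro-geometric heart of the argument, is identifying the residual valuation on each divisor $w_0 w_k^{-1} X_{w_k}$ with an instance of the same construction one codimension lower, and matching the restriction of FFLV inequalities to the slice $y_1=0$; the terminal-subword structure $\wk$ is tailored to make this recursion work, but setting up the bijection between Dyck paths and coordinates cleanly requires careful bookkeeping.

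\textbf{Step 2: volume.} By the general theorems of Kaveh--Khovanskii and Lazarsfeld--Mustata, $\mathrm{vol}(\Delta_v(GL_n/B,L_\l)) = \deg(L_\l)/d!$. The representation-theoretic input $\dim V_{N\l} = |GZ(N\l)\cap\Z^d|$, combined with the Borel--Weil identification $\dim H^0(GL_n/B, L_\l^{\otimes N}) = \dim V_{N\l}$ and Ehrhart asymptotics in $N$, yields $\deg(L_\l) = d!\cdot\mathrm{vol}(GZ(\l))$. The convex-geometric comparison of Section \ref{s.comparison} then gives $\mathrm{vol}(GZ(\l)) = \mathrm{vol}(FFLV(\l))$, so $\mathrm{vol}(\Delta_v) = \mathrm{vol}(FFLV(\l))$; together with the containment of Step 1 this forces $\Delta_v = FFLV(\l)$, completing the proof.
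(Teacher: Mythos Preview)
Your Step 2 is correct and identical to the paper's endgame. The gap is in Step 1: you propose to prove $\Delta_v \subseteq FFLV(\l)$, but the paper proves the \emph{opposite} inclusion $FFLV(\l)\subset\Delta_v$, and this choice of direction is forced by the tools available. Your induction along the flag, as you describe it, only controls the slice $\{a_1=0\}$ of $\Delta_v$ (this is Lemma~\ref{l.projective}). To bound the whole of $\Delta_v$ you need every slice $\{a_k=\s\}$, and the $\s$-slice is the Newton--Okounkov body on $Y_k$ of the twisted bundle $L_\l\otimes\Oc(-\s Y_k)$ restricted to $Y_k$, which is \emph{not} of the form $L_\mu|_{Y_k}$ for any dominant weight $\mu$ --- so your inductive hypothesis does not apply to it directly. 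The paper's key step (Lemma~\ref{l.AG}) uses the Chevalley formula (Lemma~\ref{l.Chevalley}) to write this twist on $Y_{k-1}$ as $L_{\mu(t)}\otimes\Oc(\s(s_iY_k-Y_k))\otimes E(\s)$ for an explicit piecewise-linear path of dominant weights $\mu(t)$ and an \emph{effective} Cartier divisor $E(\s)$. Effectivity (Lemma~\ref{l.effective}) yields an inclusion of Newton--Okounkov bodies in exactly one direction, giving $\s e_k+\Delta_{v_k}(Y_k,L_{\mu(t)})\subset\Delta_{v_{k-1}}(Y_{k-1},L_\l)\cap\{a_k=\s\}$; combined with the convex-geometric slicing of $FFLV$ (Lemma~\ref{l.CG}) this proves $FFLV(\l)\subset\Delta_v$ by backward induction on $k$. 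The same effectivity argument cannot give the reverse containment you are after: tensoring with an effective divisor can only enlarge the Newton--Okounkov body, so the twisted restriction could a priori have a body strictly larger than $F_k(\mu(t))$.

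The ingredient missing from your sketch is precisely this path $\mu(t)$ and the accompanying divisor calculation on the Schubert varieties; your description stops at the $y_1=0$ slice and never says how the line bundle must change as $\s$ varies. Without that, neither direction of containment goes through by induction alone, and the ``matching of Dyck-path inequalities to the slice $y_1=0$'' that you highlight is only the $t=\l_{i+j}$ case of Lemma~\ref{l.CG}.
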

We now recall the definition of $FFLV(\l)$.
Label coordinates in $\R^d$ corresponding to
$(y_1,\ldots,y_d)$ by
$(u^1_{n-1};u^2_{n-2},u^1_{n-2};\ldots;u^{n-1}_1,u^{n-2}_{1},\ldots,u^{1}_1)$.
Arrange the coordinates into the table
$$
\begin{array}{cccccccccc}
\l_1&       & \l_2    &         &\l_3          &    &\ldots    & &       &\l_n   \\
    &u^1_1&         &u^1_2  &         & \ldots   &       &  &u^1_{n-1}&       \\
    &       &u^2_1 &       &  \ldots &   &        &u^2_{n-2}&         &       \\
    &       &       &  \ddots   & &  \ddots   &      &         &         &       \\
    &       &       &  &u^{n-2}_1&     &  u^{n-2}_2 &        &         &       \\
    &       &         &    &     &u^{n-1}_1&   &              &         &       \\
\end{array}
\eqno{(FFLV)}$$
The polytope $FFLV(\l)$ is defined by inequalities
$u^l_m\ge 0$ and
$$\sum_{(l,m)\in D}u^l_m\le \l_i-\l_j$$
for all Dyck paths going from $\l_i$ to $\l_j$ in table $(FFLV)$  where $1\le i<j\le n$
(see \cite{FFL} for more details).

\begin{example} \label{e.FFLV}
{\it (a)} For $n=3$, there are six inequalities
$$0\le u^1_1\le\l_1-\l_2; \quad 0\le u^1_2\le\l_2-\l_3; \quad 0\le u^2_1; \quad
u^1_1+u^2_1+u^1_2\le\l_1-\l_3.$$
In this case, there is a unimodular change of coordinates that maps $FFLV(\l)$ to
the Gelfand--Zetlin polytope $GZ(\l)$ (see Section \ref{s.comparison} for a definition of $GZ(\l)$).

{\it (b)} For $n=4$, there are $13$ inequalities
$$0\le u^1_1\le\l_1-\l_2; \quad 0\le u^1_2\le\l_2-\l_3; \quad 0\le u^1_3\le\l_3-\l_4;\quad
0\le u^2_1, \ u^2_2, \ u^3_1;$$
$$u^1_1+u^2_1+u^1_2\le\l_1-\l_3; \quad u^1_2+u^2_2+u^1_3\le\l_2-\l_4;$$
$$u^1_1+u^2_1+u^1_2+u^2_2+u^1_3\le\l_1-\l_4; \quad u^1_1+u^2_1+u^3_1+u^2_2+u^1_3\le\l_1-\l_4.$$
In this case, $FFLV(\l)$ and $GZ(\l)$ are combinatorially different whenever $\l$ is strictly
dominant because they have different number of facets (cf. \cite[Proposition 2.1.1]{Fo}).
\end{example}

\subsection{Coordinates}\label{ss.coord}
We now introduce coordinates on the open Schubert cell in $GL_n/B$ that are compatible with the
flag $(*)$.
These coordinates seem to be natural from a geometric viewpoint and will be used to compute by hand
some examples in the end of this section.
However, they are not needed for the proof of the main result.

To motivate the definition consider first the Bott--Samelson variety $X_{\w0}$.
Its points are collections of $d$ subspaces
$\{V^i_j\subset \C^n\ |\ i+j\le n, \ i,j>0 \}$ such that $\dim V^i_j=i$, and
$V^{i}_j$, $V^{i}_{j+1}\subset V^{i+1}_j$ where we put $V^{i+1}_{n-i}:=F^{i+1}$.
Incidence relations between subspaces $V^i_j$ can be organized into the following table
(similar to the Gelfand--Zetlin table).

$$
\begin{array}{cccccccccccc}
    &V^1_1  &     &V^1_2 &         &\ldots   &         &         &V^1_{n-1}&   &   &F^1\\
    &       &V^2_1&      &\ldots   &         &         &V^2_{n-2}&         &   &F^2&   \\
    &       &     &\ddots&         &\ddots   &         & \cdots  &         &   &   &   \\
    &       &     &      &V^{n-2}_1&         &V^{n-2}_2&         &F^{n-2}  &   &   &   \\
    &       &     &      &         &V^{n-1}_1&         &F^{n-1}      &         &   &   &   \\
\end{array}
$$
where the notation
$$
 \begin{array}{ccc}
  U &  &V \\
   & W &
 \end{array}
 $$
means $U, V\subset W$.

Collections of spaces $(V^i_j\subset \C^n\ |\ i+j\le n, \ i,j\ge1 )$ appear naturally
when we start from the fixed flag $F^\bullet$ and apply $d$ one parameter deformations to get
the moving flag $M^\bullet:=(V^1_1\subset V^2_1\subset\ldots\subset V^{n-1}_1\subset \C^n)$.
The deformations are encoded by the word $\w0$ as follows.
The elementary transposition $s_i$ corresponds to $\P^1$-family of complete flags
that differ only in the $i$-th subspace.
To go from $F^\bullet$ to $M^\bullet$ we first move $F^1$ inside $F^2$ and get the flag
$(V^{1}_{n-1}\subset F^2\subset\ldots\subset F^{n-1})$, second we move $F^2$ inside $F^3$
and get $(V^{1}_{n-1}\subset V^{2}_{n-2}\subset F^3\subset\ldots\subset F^{n-1})$, third we move
$V^{1}_{n-1}$ inside $V^{2}_{n-2}$ to get $V^1_{n-2}$ and so on.

\begin{example} \label{e.coord4}
Let $n=4$.
Below is the sequence of intermediate flags between $F^\bullet$ and $M^\bullet$.
$$F^{\bullet}\stackrel{s_1}{\to}(V^{1}_{3}\subset F^2\subset F^{3})
\stackrel{s_2}{\to}(V^{1}_{3}\subset V^2_2\subset F^{3})
\stackrel{s_1}{\to}(V^{1}_{2}\subset V^2_2\subset F^{3})
\stackrel{s_3}{\to}$$
$$(V^{1}_{2}\subset V^2_2\subset V^{3}_1)
\stackrel{s_2}{\to}(V^{1}_{2}\subset V^2_1\subset V^{3}_1)
\stackrel{s_1}{\to}M^{\bullet}$$
\end{example}

\begin{remark}
The word $\w0$ is the same (after switching $s_i$ and $s_{n-i}$) as the word used in \cite[2.2]{V} to encode the path
from the fixed flag to the moving flag in order to establish a geometric Littlewood--Richardson
rule for Grassmannians.
According to \cite[3.12]{V} not every reduced decomposition of $w_0$ can be used for this purpose
which is another manifestation of the special properties of $\w0$.
\end{remark}

Note that if $F^\bullet$ and $M^\bullet$ are in general position (that is, $M^\bullet$ lies in
the open Schubert cell $C$ with respect to $F^\bullet$), then all subspaces $V^i_j$ are uniquely
defined by $M^\bullet$, namely, $V^i_j=F^{n-j+1}\cap M^{i+j-1}$.
In particular, the natural projection
$$\pi_{\w0}:X_{\w0}\to GL_n/B; \quad \pi_{\w0}:(V^{i}_j)\mapsto M^\bullet$$
is one to one over $C$.
Fix a basis $e_1$,\ldots, $e_n$ in $\C^n$ compatible with $F^\bullet$, i.e.,
$F^i=\langle e_1,\ldots,e_i\rangle$ (fixing such a basis is equivalent to fixing a maximal torus
$T\subset B$, and hence, an action of the Weyl group on flags).
Using the word $\w0$ we now introduce natural coordinates
$(x^1_{n-1};x^2_{n-2},x^1_{n-2};\ldots;x^{n-1}_1,x^{n-2}_{1},\ldots,x^{1}_1)$
on $C\simeq \pi^{-1}_{\w0}(C)$.
The origin in this coordinate system is the flag $w_0F^\bullet:=(w_0F^1\subset w_0F^2\subset\ldots
\subset w_0F^{n-1})$.
The coordinate $x^i_j$ determines the position of $V^i_j$ inside the $\P^1$-family of dimension $i$
subspaces $V^i_j(x^i_j)$ such that $V^{i-1}_{j+1}\subset V^i_j(x^i_j)\subset V^{i+1}_{j}$.
To define the coordinate $x^i_j$ on $\P^1$
uniquely up to a constant factor it is enough to choose $V^i_j(0)$ and $V^i_j(\infty)$.
The following choice seems to be the most natural.

Since $M^\bullet$ and $F^\bullet$ are in general position, that is, $\dim (F^{n-j}\cap M^{i+j})=i$,
we have inclusions of pairwise distinct subspaces:
$$
\begin{array}{ccc}
   & V^{i-1}_{j+1}=F^{n-j}\cap M^{i+j-1}& \\
 V^i_j=F^{n-j+1}\cap M^{i+j-1} & \ne &V^i_{j+1}=F^{n-j}\cap M^{i+j} \\
   & V^{i+1}_{j}=F^{n-j+1}\cap M^{i+j} &\\
\end{array}
$$
Put $V^i_j(\infty):=V^i_{j+1}$ and
$V^i_j(0):=\langle F^{n-i-j}, e_{n-j+1} \rangle\cap M^{i+j}+V^{i-1}_{j+1}$.
Note that $\langle F^{n-i-j}, e_{n-j+1} \rangle\cap M^{i+j}$ is the line spanned by a vector
$e_{n-j+1}+v$ for some $v\in F^{n-i-j}$ since $F^{n-i-j}\cap M^{i+j}=\{0\}$.
It follows that $\dim V^i_j(0)=i$, and $V^i_j(0)\ne V^i_j(\infty)$ because $e_{n-j+1}\notin F^{n-j}$.
By construction, $V^{i-1}_{j+1}\subset V^i_j(0)\subset V^{i+1}_{j}$.
Note also that $V^i_j$ lies in $\mathbb A^1=\P^1\setminus\{V^i_j(\infty)\}$ when
$M^\bullet$ and $F^\bullet$ are in general position.

\begin{remark}\label{r.flag}
It is not hard to check that coordinates
$(y_1,\ldots,y_d):=(x^1_{n-1};x^2_{n-2},x^1_{n-2};\ldots;x^{n-1}_1,x^{n-2}_{1},\ldots,x^{1}_1)$
are compatible with the flag $(*)$ of Schubert subvarieties.
\end{remark}

\begin{example}\label{e.coord3} Let $n=3$.
Then
$$V^1_1=\langle (x^1_1x^1_2-x^2_1)e_1+x^1_1e_2+e_3\rangle; \quad
V^1_2=\langle x^1_2e_1+e_2\rangle;$$
$$V^2_1=\langle x^1_2e_1+e_2, -x^2_1e_1+e_3\rangle.$$
Figure 1 depicts projectivizations in $\P^2$ of various subspaces involved in this example.
\begin{figure}
\includegraphics[width=10cm]{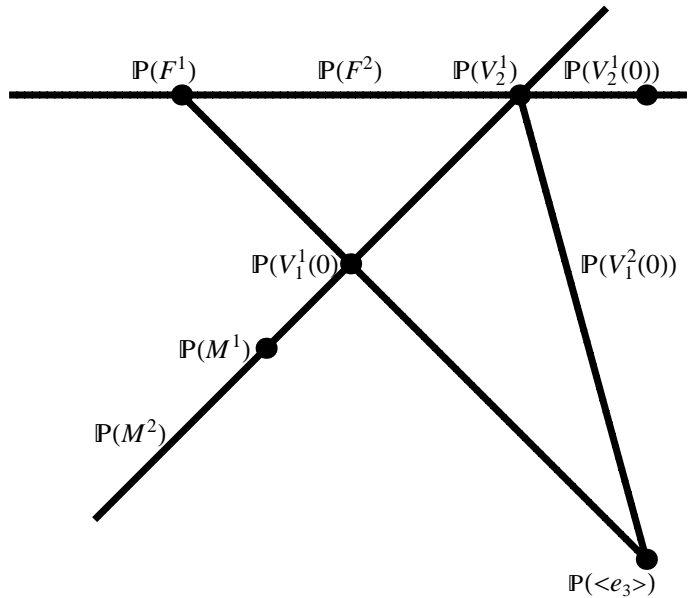}
\caption{Coordinates on flags for $n=3$.}
\end{figure}
\end{example}

\subsection{Examples}
Theorem \ref{t.main} will be proved in the next section.
Here we verify it by hand in three simplest examples.
\begin{example}{cf. \cite[Section 6.4]{An}}
Let $n=3$, and $\l=(2,1,0)$.
The flag variety $GL_3/B$ can be regarded as a hypersurface in $\P^2\times {\P^2}^*$ under
the embedding $(V^1_1, V^2_1)\mapsto V^1_1\times V^2_1$.
The line bundle $L_\l$ on $GL_3/B$ is the pullback of the dual tautological line bundle $\Oc(1)$
on $\P^8$ under the embedding:
$$p_\l:GL_3/B\hookrightarrow\P^2\times {\P^2}^*\stackrel{\mbox{\tiny Segre}}{\longrightarrow}\P^{8}.
$$
Using Example \ref{e.coord3} we get that in coordinates
$(y_1,y_2,y_3)=(x^1_2,x^2_1,x^1_1)$ the map $p_\l$ takes the form
$$p_\l:(y_1,y_2,y_3)\mapsto \begin{pmatrix}
y_1y_3-y_2\\
y_3\\
1\\
\end{pmatrix}
\times
\begin{pmatrix}
y_2&&y_1&&1\\
\end{pmatrix}.$$
Hence, $H^0(GL_3/B,L_\l)$ has the basis $1$, $y_1$, $y_2$, $y_3$, $y_1y_3$, $y_2y_3$,
$y_1y_2y_3-y_2^2$, $y_1^2y_3-y_1y_2$.
Applying the valuation $v$ we get $8$ integer points $(0,0,0)$, $(1,0,0)$, $(0,1,0)$, $(0,0,1)$,
$(1,0,1)$, $(0,1,1)$, $(0,2,0)$, $(1,1,0)$, whose convex hull in $\R^3$ is given exactly by
inequalities of Example \ref{e.FFLV}{\it (a)}.
\end{example}
\begin{example} Let $n=4$, and $\l=(1,1,0,0)$.
The line bundle $L_\l$ on $GL_4/B$ is the pullback of the dual tautological line bundle $\Oc(1)$
on $\P^5$ under the natural projection $GL_4/B\to G(2,4)$ composed with the Pl\"ucker
embedding $G(2,4)\hookrightarrow\P^5$ of the Grassmannian.
Using Example \ref{e.coord4} we get that in coordinates $(y_1,\ldots,y_6)$
the plane $V^2_1$ is spanned by the vectors $(y_4y_6+y_5,y_4,1,0)$ and $(y_2y_6+y_3,y_2,0,1)$.
Hence, the map $p_\l$ has the form
$$p_\l:(y_1,\ldots,y_6)\mapsto (y_2y_5-y_3y_4: -(y_2y_6+y_3): y_4y_6+y_5: -y_2: y_4: 1).$$
The valuation $v$ takes the sections of $H^0(GL_4/B,L_\l)$ to 6 integer points
in the 4-space $\{u^1_1=u^1_3=0\}$.
In coordinates $(u^2_1,u^3_1,u^1_2,u^2_2)$, these points are $(0,1,1,0)$, $(0,1,0,0)$, $(0,0,0,1)$,
$(1,0,0,0)$, $(0,0,1,0)$, $(0,0,0,0)$.
Their convex hull in $\R^4$ is given exactly by inequalities of Example \ref{e.FFLV}{\it (b)}.
\end{example}

\begin{example} \label{e.Grass}
The previous example can be extended to $G(3,6)$, that is, $n=6$ and
$\l=(1,1,1,0,0,0)$.
This is the minimal example when $FFLV(\l)$ and $GZ(\l)$ are
not combinatorially equivalent (cf. \cite[Proposition 2.1.1]{Fo}).
When computing $V^3_1$ in coordinates $(y_1,\ldots,y_{15})$ one can immediately ignore all
monomials that contain $y_{15}$, $y_{14}$, $y_{13}$ since they never appear as the lowest
order terms.
The same holds for $y_3$, $y_2$, $y_1$.
If $y_{15}=y_{14}=y_{13}=0$, then $p_\l$ takes the following simple form:
$$p_\l:(y_4,\ldots,y_{12})\mapsto 3\times3 \mbox{ minors of }
\begin{pmatrix}
y_{10}&y_{11}&y_{12}&1&0&0\\
y_7&y_8&y_9&0&1&0\\
y_4&y_5&y_6&0&0&1\\
\end{pmatrix}.$$
Hence, we have to compute the lowest order terms of all minors of the $3\times 3$ matrix formed
by the first three columns.
After rotating this matrix as follows
$$
\begin{matrix}&&y_{10}&&&&&\\
&y_7&&y_{11}&&&&\\
y_4&&y_8&&y_{12}&&&\\
&y_5&&y_9&& &&\\
&&y_6&&&&&\\
\end{matrix}$$
it is easy to see that the lowest order monomials in the minors are in bijective correspondence with
those collections of $u^i_j$ (where $3\le i+j\le 6$, $j\le3$) in table $(FFLV)$ that can not occur
in the same Dyck path.
By definition, $FFLV(\l)$ contains an integer point with $u^i_j=1$ and $u^l_m=1$ iff no Dyck path
passes through both $u^i_j$ and $u^l_m$.
Hence, the valuation $v$ maps bijectively the minors to the integer points in $FFLV(\l)$.
\end{example}
\begin{remark} Arguments of Example \ref{e.Grass} allow one to identify
$\Delta_v(GL_n/B,L_{\om_i})$ with $FFLV(\om_i)$ for any fundamental weight $\om_i$ of $GL_n$.
This might lead to an alternative proof of Theorem \ref{t.main} if one uses that
$\Delta_v(GL_n/B,L_\l)$ for $\l=k_1\om_1+\ldots+k_{n-1}\om_{n-1}$
contains the Minkowski sum $k_1\Delta_v(GL_n/B,L_{\om_1})+\ldots+
k_{n-1}\Delta_v(GL_n/B,L_{\om_{n-1}})$.
\end{remark}

\subsection{Symplectic case}\label{ss.C}
A statement analogous to Theorem \ref{t.main} does not hold in type $C$
already in the case of $Sp_4$.
We now discuss this case in more detail.
For the rest of this section, $X$ denotes the complete flag variety for $Sp_4$.
The flag of translated Schubert subvarieties analogous to $(*)$ has the form
$$s_1s_2s_1s_2X_{\id}\subset s_1s_2s_1X_{s_2}\subset s_1s_2X_{s_1s_2}\subset
s_1X_{s_2s_1s_2}\subset X,$$
where $s_1$, $s_2$ are simple reflections.
The resulting Newton--Okounkov polytopes were computed in \cite[Proposition 4.1]{K14}.
Regardless of whether $s_1$ corresponds to the shorter or the longer root,
these polytopes have 11 vertices (for a strictly dominant weight)
while Feigin--Fourier--Littelmann--Vinberg polytopes (as well as string polytopes) for $Sp_4$
have 12 vertices.
In particular, the former are not combinatorially equivalent to the latter.

Note that the string polytopes for the decomposition
$$\w0=(s_1)(s_2s_1s_2)(\ldots)(s_n s_{n-1}\ldots s_2 s_1 s_2\ldots s_{n-1}s_n), \eqno(Sp)$$
where $s_1$ corresponds to the longer root,
coincides (after a unimodular change of coordinates) with the symplectic
Gelfand--Zetlin polytopes by \cite[Corollary 6.3]{L}.
The latter were exhibited in \cite{O} as the Newton--Okounkov bodies of the symplectic flag
variety $Sp_{2n}/B$
for the lowest term valuation associated with the $B$-invariant flag of
(not translated) Schubert subvarieties corresponding to the initial subwords of $\w0$:
$$X_{\id}\subset X_{w_0w_{1}^{-1}}\subset\ldots\subset X_{w_0w_{d-1}^{-1}}\subset Sp_{2n}/B,$$
where $d=n^2=\dim Sp_{2n}/B$.

Finally, note that string polytopes for any connected reductive group $G$ and any reduced
decomposition $\w0$ were obtained in \cite{Ka} as the Newton--Okounkov bodies of the complete
flag variety $G/B$ for the {\bf highest} term valuation associated with
the $B$-invariant flag of Schubert subvarieties:
$$X_{\id}\subset X_{w_{d-1}}\subset\ldots\subset X_{w_1}\subset G/B.$$
Here $d$ denotes the dimension of $G/B$ (and the length of $\w0$).
Note that for $G=GL_n$ and $\w0$ as in Section \ref{ss.val},
the string polytope coincides with the Gelfand--Zetlin polytope in type $A$ by \cite[Corollary 5.2]{L}.
 While the highest term valuation comes naturally when dealing with crystal bases and string
polytopes the lowest term valuation is more natural from a geometric viewpoint since it can be
interpreted using the order of the pole of a rational function along a hypersurface.
\section{Proof of Theorem \ref{t.main}}
\label{s.proof}
We first formulate and prove simple general results about Newton--Okounkov bodies and recall
classical facts about divisors on Schubert varieties.
Then we prove Theorem \ref{t.main}.
\subsection{Preliminaries}
We will need the following two simple lemmas on Newton--Okounkov convex bodies.
\begin{lemma}\label{l.effective} Let $X$ be a variety, $L$ a line bundle on $X$, and
$v$ a valuation on $\C(X)$.
If $D$ is an effective divisor on $X$, then
$$\Delta_v(X,L)\subset\Delta_v(X,L\otimes \Oc(D)).$$
\end{lemma}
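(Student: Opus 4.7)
The plan is to exploit the fact that, since $D$ is effective, the line bundle $\Oc(D)$ carries a canonical global section $s_D\in H^0(X,\Oc(D))$ with zero divisor exactly $D$. Multiplication by powers of $s_D$ will embed the graded section algebra of $L$ into that of $L\otimes\Oc(D)$ in a way that is invisible to the valuation $v$, which immediately yields the containment of the Newton--Okounkov semigroups and hence of their convex closures.

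First I would recall the definition: fix any nonzero reference section $\tau$ of $L$; then
$$\Delta_v(X,L)=\overline{\bigcup_{k\ge 1}\frac{1}{k}\bigl\{\,v(s/\tau^k)\ :\ 0\ne s\in H^0(X,L^{\otimes k})\,\bigr\}}\subset\R^d.$$
A change of reference section only translates this set, so to obtain an honest inclusion of bodies one must choose compatible references for $L$ and $L\otimes\Oc(D)$.

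Next, I would take $\tau':=\tau\cdot s_D\in H^0(X,L\otimes\Oc(D))$ as the reference for the larger bundle. For every $s\in H^0(X,L^{\otimes k})$, the section $s\cdot s_D^k$ belongs to $H^0\bigl(X,(L\otimes\Oc(D))^{\otimes k}\bigr)$, and since $s_D^k/s_D^k=1$ as rational functions,
$$v\!\left(\frac{s\cdot s_D^k}{(\tau')^k}\right)=v\!\left(\frac{s\cdot s_D^k}{\tau^k s_D^k}\right)=v\!\left(\frac{s}{\tau^k}\right).$$
Thus every point of the Newton--Okounkov semigroup of $(L,\tau)$ already appears in that of $(L\otimes\Oc(D),\tau')$; passing to normalized limits and closures gives $\Delta_v(X,L)\subset\Delta_v(X,L\otimes\Oc(D))$.

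There is no real obstacle here; the only point to be careful about is the compatible normalization of reference sections. Once $\tau'=\tau\cdot s_D$ is fixed, the statement reduces to the tautology above, and the injectivity of multiplication by $s_D^k$ on global sections ensures that no valuation values are lost in the passage to the bigger bundle.
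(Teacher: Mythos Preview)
Your argument is correct and is essentially the same as the paper's: both use the canonical section of $\Oc(D)$ (the paper writes it as $1\in H^0(X,\Oc(D))$, you call it $s_D$) to embed $H^0(X,L^{\otimes k})$ into $H^0\bigl(X,(L\otimes\Oc(D))^{\otimes k}\bigr)$ via $s\mapsto s\otimes s_D^{\,k}$, and observe that this preserves the valuation values. The only difference is cosmetic: you spell out the choice of compatible reference sections $\tau'=\tau\cdot s_D$, whereas the paper leaves this bookkeeping implicit.
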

\begin{proof} Since $D$ is effective, $1\in H^0(X,\Oc(D))$.
The lemma follows directly from the definition of Newton--Okounkov bodies since
for any $l\in\N$ we have the inclusion $i:H^0(X,L^{\otimes l})\subset H^0(X,(L\otimes \Oc(D))^{\otimes l})$ given by
$i(s)=s\otimes 1$.
\end{proof}
The lemma below is a partial case of \cite[Theorem 4.24]{LM}.
We provide a short proof for the reader's convenience.
\begin{lemma}\label{l.projective}
Let $X\subset\P^N$ be a projective variety of dimension $d$, and $Y_\bullet=
(\{x_0\}=Y_d\subset \ldots \subset Y_1\subset Y_0=X)$ a complete flag of subvarieties
at a smooth point $x_0\in X$.
Consider a valuation $v$ on $\C(X)$ associated with the flag $Y_\bullet$, and the corresponding
coordinates $a_1,\ldots,a_d$ on $\R^d$.
Let $v_1$ be the restriction of the valuation $v$ to $\C(Y_1)$.
Denote by $L$ the restriction of the dual tautological bundle $\Oc_{\P^N}(1)$ to $X$.
Then we have
$$\Delta_{v_1}(Y_1,L|_{Y_1})=\Delta_{v}(X,L)\cap\{a_1=0\}.$$
\end{lemma}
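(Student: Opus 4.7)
The plan is to prove the equality through two one-sided inclusions: the inclusion $\Delta_{v_1}(Y_1,L|_{Y_1})\subset\Delta_v(X,L)\cap\{a_1=0\}$ by lifting sections from $Y_1$ to $X$ via Serre vanishing, and the reverse inclusion by matching $(d-1)$-dimensional volumes of the two compact convex bodies.

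First I would record the structure of $v$ coming from the flag: the first coordinate $a_1(v(s))$ of any nonzero $s\in \C(X)$ equals the order of vanishing of $s$ along $Y_1$, and whenever $s$ does not vanish identically on $Y_1$ the remaining coordinates coincide with $v_1(s|_{Y_1})$. Since $L$ is the restriction of $\Oc_{\P^N}(1)$, it is very ample, so Serre vanishing applied to
$$0\to L^{\otimes k}\otimes\mathcal I_{Y_1}\to L^{\otimes k}\to L|_{Y_1}^{\otimes k}\to 0$$
gives $H^1(X,L^{\otimes k}\otimes\mathcal I_{Y_1})=0$ for $k\gg 0$, hence the restriction map $r_k:H^0(X,L^{\otimes k})\twoheadrightarrow H^0(Y_1,L|_{Y_1}^{\otimes k})$ is surjective in that range. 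The forward inclusion follows by lifting: for any $t\in H^0(Y_1,L|_{Y_1}^{\otimes k})\setminus\{0\}$ with $k\gg 0$, pick $s\in H^0(X,L^{\otimes k})$ with $r_k(s)=t$; then $v(s)/k=(0,v_1(t))/k$ lies in $\Delta_v(X,L)\cap\{a_1=0\}$, and taking closures gives the inclusion.

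The reverse inclusion is the main obstacle. Both sides are compact convex bodies of dimension $d-1$ in the supporting hyperplane $\{a_1=0\}$ of $\Delta_v(X,L)$, and the forward inclusion puts one inside the other; so it suffices to check that they have the same $(d-1)$-dimensional volume. By the Kaveh--Khovanskii volume theorem, $\operatorname{vol}_{d-1}(\Delta_{v_1}(Y_1,L|_{Y_1}))=\lim_{k\to\infty}\dim H^0(Y_1,L|_{Y_1}^{\otimes k})/k^{d-1}$. The slice $\Delta_v(X,L)\cap\{a_1=0\}$ is a face of the Newton--Okounkov body, and I would compute its $(d-1)$-volume as the asymptotic lattice-point count $\lim_k \#(\Gamma_k\cap\{a_1=0\})/k^{d-1}$, where $\Gamma_k:=v(H^0(X,L^{\otimes k})\setminus\{0\})$. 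By the structure of $v$ recalled above and the surjectivity of $r_k$ for $k\gg 0$, this count equals $\dim r_k(H^0(X,L^{\otimes k}))=\dim H^0(Y_1,L|_{Y_1}^{\otimes k})$. The two $(d-1)$-volumes therefore coincide, and the containment of compact convex bodies of the same volume forces equality. The most delicate step is the slice-volume asymptotic itself, which I would justify by an Ehrhart-type argument applied to the face $\{a_1=0\}$ of the rescaled body $k\Delta_v(X,L)$, using that the valuation semigroup $\Gamma$ asymptotically exhausts the lattice points of this face.
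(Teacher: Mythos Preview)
Your forward inclusion is fine and is essentially the paper's argument (the paper obtains surjectivity of the restriction map by factoring through $H^0(\P^N,\Oc(l))$ rather than via Serre vanishing, which is cosmetic).

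The reverse inclusion has a genuine gap. You need
\[
\mathrm{vol}_{d-1}\bigl(\Delta_v(X,L)\cap\{a_1=0\}\bigr)=\lim_{k\to\infty}\frac{\#\bigl(\Gamma_k\cap\{a_1=0\}\bigr)}{k^{d-1}},
\]
and you correctly flag this as the delicate step, but the proposed justification is circular: saying that $\Gamma$ ``asymptotically exhausts the lattice points of this face'' is equivalent to the equality $\Delta_{v_1}(Y_1,L|_{Y_1})=\Delta_v(X,L)\cap\{a_1=0\}$ you are trying to prove. In general, for a set $S\subset\{a_1\ge 0\}$ the face of $\overline{\conv(S)}$ at the supporting hyperplane $\{a_1=0\}$ can be strictly larger than $\overline{\conv(S\cap\{a_1=0\})}$, and no Ehrhart-type count on $S$ alone detects the discrepancy. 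The paper avoids volumes entirely: from the observation that $\bar s\ne 0$ iff $v(s)\in\{a_1=0\}$ it identifies $\Gamma_l\cap\{a_1=0\}$ with the valuation image on $Y_1$ and concludes directly, declaring the lemma a special case of \cite[Theorem~4.24]{LM} for the passage from the semigroup level to the Newton--Okounkov body. If you want to keep your volume strategy, that Lazarsfeld--Musta\c{t}\u{a} slicing result (or an equivalent input) is exactly the missing ingredient.
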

\begin{proof} It is well-known that the natural restriction map
$H^0(\P^N,\Oc_{\P^N}(l))\to H^0(X,L^{\otimes l})$ is surjective for sufficiently large $l$.
Similarly, the map $H^0(\P^N,\Oc_{\P^N}(l))\to H^0(Y_1,L^{\otimes l}|_{Y_1})$ is surjective.
Hence, the map $H^0(X,L^{\otimes l})\to H^0(Y_1,L^{\otimes l}|_{Y_1})$ is surjective, and
$\Delta_{v_1}(Y_1,L|_{Y_1})\subset\Delta_{v}(X,L)$.
For a section $s\in H^0(X,L^{\otimes l})$, denote by $\bar s$ its restriction to $Y_1$.
Then $\bar s\ne 0$ iff $v(s)\in\{a_1=0\}$.
Hence, $\Delta_{v_1}(Y_1,L|_{Y_1})=\Delta_{v}(X,L)\cap\{a_1=0\}$ as desired.
\end{proof}
We will also use the classical Chevalley formula \cite[Proposition 1.4.3]{B} and the description
of Cartier divisors on Schubert varieties \cite[Proposition 2.2.8]{B}.
When applied to $X_{w}$ from $(*)$ and $L_\l$ these propositions immediately yield the following
\begin{lemma}\label{l.Chevalley}
Let $w=(s_i\ldots s_1)(s_{n-j+1}\ldots s_1)\ldots(s_{n-1}\ldots s_1)$ where
$i+j\le n$.
Then the Picard group of $X_w$ is spanned by the classes of $X_{ws}$ where
$s$ runs through transpositions $s_1$, $s_2$\ldots,
$s_{j-1}$; $(j~j+1)$, $(j~j+2)$,\ldots, $(j~i+j)$ and
$(j-1~i+j+1)$, $(j-1~i+j+2)$,\ldots, $(j-1~ n)$.
In particular,
$$L_\l|_{X_{w}}=\bigotimes_{l=1}^{j-1}\Oc(X_{ws_l})^{\l_l-\l_{l+1}}\otimes
\bigotimes_{l=1}^{i}\Oc(X_{w(j~l+j)})^{\l_j-\l_{l+j}}\otimes$$
$$\otimes\bigotimes_{l=i+j+1}^{n}\Oc(X_{w(j-1~l)})^{\l_{j-1}-\l_{l}}.$$
\end{lemma}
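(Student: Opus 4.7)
The plan is to apply directly the two results of Brion cited just above. Proposition~2.2.8 of \cite{B} identifies $\Pic(X_w)$ with the free abelian group on the Schubert divisors of $X_w$, that is, on the classes $[X_{wt}]$ for reflections $t$ with $\ell(wt)=\ell(w)-1$. Proposition~1.4.3 of \cite{B} (Chevalley's formula) then expresses $L_\l|_{X_w}$ as the sum of these divisor classes weighted by $\langle\l,\alpha_t^\vee\rangle$; for $GL_n$ and a transposition $t=(a,b)$ with $a<b$ this coefficient is simply $\l_a-\l_b$. Once the cocover reflections of $w$ have been enumerated, both statements of the lemma are immediate. So the real work is combinatorial: list every pair $(a,b)$ with $a<b$ such that $\ell(wt)=\ell(w)-1$ for $t=(a,b)$.

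I would first compute $w$ in one-line notation by composing the blocks right-to-left. Each full block $(s_{n-l}\ldots s_1)$ realises the cycle sending $1\mapsto n-l+1$ and $m\mapsto m-1$ for $2\le m\le n-l+1$, while fixing larger values. A short induction on the number of full blocks shows that after the $j-1$ rightmost blocks of $w$ the permutation sends $p\mapsto n-p+1$ for $p\le j-1$ and $p\mapsto p-j+1$ for $p\ge j$. Applying the partial block $(s_i\ldots s_1)$ on the left affects only values $\le i+1$, and yields
\[
w=[\,n,\,n-1,\,\ldots,\,n-j+2,\ i+1,\ 1,2,\ldots,i,\ i+2,i+3,\ldots,n-j+1\,],
\]
where the value $i+1$ sits in position $j$.

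Since length is the number of inversions, a reflection $t=(a,b)$ is a cocover of $w$ exactly when $w(a)>w(b)$ and no intermediate position $c\in(a,b)$ satisfies $w(b)<w(c)<w(a)$. I would check this condition by splitting the positions into the four intervals
\[
I_1=\{1,\ldots,j-1\},\quad I_2=\{j\},\quad I_3=\{j+1,\ldots,i+j\},\quad I_4=\{i+j+1,\ldots,n\},
\]
on each of which $w$ is monotone. Most cross-interval pairs are ruled out at a glance because an obstructing value is carried by an intermediate position, and the surviving cocovers fall into three families: the adjacent transpositions $(l,l+1)$ for $l=1,\ldots,j-1$; the transpositions $(j,l+j)$ for $l=1,\ldots,i$, which are cocovers because the ascending run $w|_{I_3}$ lies below $w(j)=i+1$; and the transpositions $(j-1,l)$ for $l=i+j+1,\ldots,n$, which survive because $w(j)=i+1$ is smaller than every value on $I_4$ and therefore does not lie in the forbidden range $(w(l),w(j-1))$.

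Feeding these $n-1$ cocovers into Chevalley's formula yields the three products in the stated expression for $L_\l|_{X_w}$, with coefficients $\l_l-\l_{l+1}$, $\l_j-\l_{l+j}$, $\l_{j-1}-\l_l$ respectively. The one delicate point is the analysis of the family $(j-1,l)$ with $l\in I_4$: here one must verify that neither $w(j)=i+1$ nor any value $w(c)$ on $I_3$ lies strictly between $w(l)$ and $w(j-1)=n-j+2$, and this is the place where the hypothesis $i+j\le n$ is genuinely used.
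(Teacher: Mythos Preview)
Your approach is exactly the one the paper has in mind: the text gives no argument beyond the sentence ``When applied to $X_{w}$ from $(*)$ and $L_\l$ these propositions immediately yield the following,'' so the entire content is an appeal to Brion's Propositions~1.4.3 and~2.2.8 together with the explicit enumeration of the cocover reflections of $w$. You have supplied precisely this enumeration, and your one-line computation of $w=[n,\ldots,n-j+2,\,i+1,\,1,\ldots,i,\,i+2,\ldots,n-j+1]$ and the subsequent case analysis of pairs across the intervals $I_1,\ldots,I_4$ are correct; the three families you find match the transpositions listed in the lemma, and plugging $\langle\l,\alpha_{(a,b)}^\vee\rangle=\l_a-\l_b$ into Chevalley's formula gives the displayed tensor product.

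One small remark: the hypothesis $i+j\le n$ is actually already needed to obtain the stated one-line form of $w$ (it guarantees that the partial block $(s_i\ldots s_1)$ fixes the values $n,\ldots,n-j+2$ produced by the full blocks), not only in the analysis of the third family; but this does not affect the validity of your argument.
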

\begin{remark} Lemma \ref{l.Chevalley} implies the following important property of
the decomposition  $\w0$.
For every $k\le d$, the Schubert subvariety $X_{w_k}$ is a Cartier divisor on $X_{w_{k-1}}$.
This property is used in the proof below.
It would be interesting to find decompositions with this property for other reductive groups
(decomposition $(Sp)$ for $Sp_n$ does not have this property).

Moreover, it is easy to check that all $X_{w_k}$ are smooth by \cite[Theorem 3.7.5]{M} but this is
not used in the proof.
\end{remark}
\subsection{Proof of Theorem \ref{t.main}} We will prove by induction the following more general statement.
Put $Y_k:=w_0w_{k}^{-1}X_{w_k}$, and let $v_k$ be the restriction of the valuation $v$ to
$\C(Y_k)\simeq\C(y_{k+1},\ldots,y_{d})$ (see Remark \ref{r.flag}).
We will also use an alternative labeling of coordinates in $\R^d$, namely, $(a_1,a_2,\ldots,a_d)=
(u^1_{n-1};u^2_{n-2},u^1_{n-2};\ldots;u^{n-1}_1,u^{n-2}_{1},\ldots,u^{1}_1)$.
Let $F_k(\l)$ be the face of $FFLV(\l)$ given by equations $u^l_m=0$ for all pairs $(l,m)$ such
that either $m>j$, or $m=j$ and $l\ge i$.
Here $k$ and $(i,j)$ are related  via the above identification of coordinates $a_k$
and $u^i_j$, i.e., $a_k=u^i_j$.

\begin{thm}\label{t.gmain} The Newton--Okounkov convex body $\Delta_{v_k}(Y_k,L_\l|_{Y_k})$ coincides
with the face $F_k(\l)$.
\end{thm}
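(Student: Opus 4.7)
\begin{varproof}[Proof plan]
My plan is to prove Theorem~\ref{t.gmain} by downward induction on $k$, starting from the trivial base case $k=d$ (where $Y_d=w_0\cdot B/B$ is a point, so $\Delta_{v_d}=\{0\}=F_d(\l)$) and ending at $k=0$, which is Theorem~\ref{t.main}. The inductive step will reconstruct the $(d-k)$-dimensional body $\Delta_{v_k}(Y_k,L_\l|_{Y_k})$ by controlling the sections of $L_\l|_{Y_k}$ according to their order of vanishing along the Cartier divisor $Y_{k+1}\subset Y_k$. That $Y_{k+1}$ is Cartier in $Y_k$ is the Remark following Lemma~\ref{l.Chevalley}, and Remark~\ref{r.flag} says it is locally cut out by the coordinate $y_{k+1}$.

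For the slice at $a_{k+1}=0$, I would apply Lemma~\ref{l.projective} to the pair $Y_{k+1}\subset Y_k$ embedded via a sufficiently positive multiple of $L_\l$, and combine it with the inductive hypothesis to obtain
\[
\Delta_{v_k}(Y_k,L_\l|_{Y_k})\cap\{a_{k+1}=0\}=\Delta_{v_{k+1}}(Y_{k+1},L_\l|_{Y_{k+1}})=F_{k+1}(\l)=F_k(\l)\cap\{a_{k+1}=0\},
\]
the last equality being immediate from the definition of $F_k(\l)$.

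For slices at $a_{k+1}=t>0$, a section $s\in H^0(Y_k,L_\l|_{Y_k})$ whose first valuation coordinate equals $t$ vanishes to order exactly $t$ along $Y_{k+1}$; division by $y_{k+1}^t$ and restriction to $Y_{k+1}$ identifies it with a section of $(L_\l\otimes\Oc(-tY_{k+1}))|_{Y_{k+1}}$. Applying the Chevalley-type bookkeeping of Lemma~\ref{l.Chevalley} on $Y_k$ one would express the normal bundle $\Oc(Y_{k+1})|_{Y_{k+1}}$ as a line bundle of the form $L_\mu|_{Y_{k+1}}$ for an explicit weight $\mu=\mu(k)$, so that the residual bundle becomes $L_{\l-t\mu}|_{Y_{k+1}}$; the inductive hypothesis then identifies the $t$-slice of $\Delta_{v_k}(Y_k,L_\l|_{Y_k})$ with $F_{k+1}(\l-t\mu)$.

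It would then remain to verify the purely convex-geometric identity $F_k(\l)\cap\{a_{k+1}=t\}=F_{k+1}(\l-t\mu)$ for $0\le t\le M$, where $M=\max\{a_{k+1}:a\in F_k(\l)\}$: fixing the coordinate $u^i_j=t$ in the FFLV table must turn each Dyck path inequality through $(i,j)$ into the corresponding Dyck path inequality for the shifted weight $\l-t\mu$. The hard part will be precisely this matching step: pinning down $\mu$ from the Chevalley bookkeeping and aligning it cleanly with the Dyck-path combinatorics of the FFLV polytope. If the direct route proves unwieldy, a backup strategy is to prove the containment $\Delta_{v_k}(Y_k,L_\l|_{Y_k})\subseteq F_k(\l)$ directly by translating each Dyck path inequality into an effective-divisor bound via Lemma~\ref{l.effective}, and then conclude equality at $k=0$ via the volume identity $\Volume(FFLV(\l))=\Volume(GZ(\l))$ to be established in Section~\ref{s.comparison} together with the classical integer-point count for the Gelfand--Zetlin polytope; the cases $k>0$ would then follow by iterating Lemma~\ref{l.projective}.
\end{varproof}
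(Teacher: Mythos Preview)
Your inductive skeleton (downward on $k$, slicing by the first remaining coordinate, base case $k=d$, closing via a volume identity at $k=0$) matches the paper's. The gap is in the heart of the inductive step.

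You propose to write the normal bundle as $L_\mu|_{Y_{k+1}}$ for a \emph{fixed} weight $\mu$, so that the residual bundle at level $t$ is $L_{\l-t\mu}|_{Y_{k+1}}$ and the $t$-slice of $F_k(\l)$ is $F_{k+1}(\l-t\mu)$. This linear picture is wrong on both sides. Convex-geometrically, Lemma~\ref{l.CG} shows that the slice $F_{k-1}(\l)\cap\{a_k=\tau\}$ (paper's indexing) equals a shift of $F_k(\mu(\l_{i+j}+\tau))$ where $\mu_l(t)=\max\{\l_l,t\}$ for $j<l\le i+j$; this is genuinely \emph{piecewise} linear in $\tau$, with break points at $\l_{i+j-1}-\l_{i+j},\ \l_{i+j-2}-\l_{i+j},\ldots$, so no single $\mu$ works once $i>1$. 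Algebro-geometrically, past the first break point the weight $\l-t\mu$ is no longer dominant and the twisted bundle $L_\l\otimes\Oc(-tY_{k+1})$ is not semiample, so neither the induction hypothesis nor Lemma~\ref{l.projective} (which needs a projectively normal embedding) applies; you cannot conclude that the $t$-slice of $\Delta_{v_k}$ equals a Newton--Okounkov body on $Y_{k+1}$.

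The paper's fix is to abandon slice-by-slice equality and prove only the inclusion $F_k(\l)\subset\Delta_{v_k}(Y_k,L_\l|_{Y_k})$ by induction. The key identity (Lemma~\ref{l.AG}) is that on $Y_{k-1}$,
\[
L_\l|_{Y_{k-1}}\otimes\Oc(-\tau Y_k)=L_{\mu(t)}|_{Y_{k-1}}\otimes\Oc\bigl(\tau(s_iY_k-Y_k)\bigr)\otimes E(\tau),
\]
where $s_iY_k-Y_k$ is the principal divisor of $y_k$ and $E(\tau)$ is an \emph{effective} combination of Schubert divisors (via Lemma~\ref{l.Chevalley}). Since $\mu(t)$ is dominant, Lemma~\ref{l.projective} applies to $L_{\mu(t)}$; Lemma~\ref{l.effective} then gives the one-sided inclusion. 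Equality is obtained only at $k=0$ via the volume comparison of Section~\ref{s.comparison}, and then propagated back to all $k$. Note also that your backup strategy has the inclusion reversed: Lemma~\ref{l.effective} enlarges Newton--Okounkov bodies, so it naturally yields $F_k(\l)\subset\Delta_{v_k}$, not Dyck-path upper bounds on $\Delta_{v_k}$.
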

In particular, this theorem reduces to Theorem \ref{t.main} when $k=0$ (we put $F_0(\l)=FFLV(\l)$).
The main idea of the proof is to identify the slices of $\Delta_{v_{k-1}}(Y_{k-1},L_\l|_{Y_{k-1}})$
by hyperplanes $\{a_k=\const\}$ with $F_k(\mu)$ for suitable $\mu$.
We will need a convex-geometric lemma for slices of $F_{k-1}(\l)$ and a similar
algebro-geometric lemma for $\Delta_{v_{k-1}}(Y_{k-1},L_\l|_{Y_{k-1}})$.

\begin{lemma}\label{l.CG}
There exists a path of dominant weights $\mu(t)$
such that
$$
(t-\l_{i+j}) e_{k}+F_k(\mu(t))=
F_{k-1}(\l)\cap\{a_k=t-\l_{i+j}\}.
$$
for all $t\in[\l_{i+j},\l_j]$.
Here $e_k$ denotes the $k$-th basis vector in $\R^d$.
In particular,
$$F_{k-1}(\l)=\conv\{(t-\l_{i+j}) e_{k}+F_k(\mu(t)) \ |\ \l_{i+j}\le t \le \l_j\}.$$
\end{lemma}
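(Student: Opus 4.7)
The plan is to exhibit an explicit piecewise-linear path $\mu(t)$ and verify the claimed equality by comparing defining Dyck path inequalities. Set
\[
\mu_l(t) = \begin{cases} \l_l & \text{if } l \le j \text{ or } l > i+j,\\ \max(\l_l,\, t) & \text{if } j < l < i+j,\\ t & \text{if } l = i+j. \end{cases}
\]
Dominance for $t\in[\l_{i+j},\l_j]$ is immediate: the potentially nontrivial step $\mu_j(t)\ge\mu_{j+1}(t)$ reads $\l_j\ge\max(\l_{j+1},t)$ and holds since $t\le\l_j$; the step $\mu_{i+j}(t)\ge\mu_{i+j+1}(t)$ reduces to $t\ge\l_{i+j+1}$, which follows from $t\ge\l_{i+j}\ge\l_{i+j+1}$; the remaining steps follow from dominance of $\l$.

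To prove the set equality I match the Dyck path inequalities of $F_{k-1}(\l)$ with $u^i_j$ set to $t-\l_{i+j}$ against those of $F_k(\mu(t))$. A short count of down and up-right steps shows that a Dyck path from $\l_r$ to $\l_s$ passes through the node $(i,j)$ if and only if $r\le j$ and $s\ge i+j$, in which case the substitution lowers the right-hand side by $t-\l_{i+j}$. For $s=i+j$ the modified right-hand side $\l_r-t$ equals $\mu_r(t)-\mu_s(t)$ by construction. For $s>i+j$ the substituted inequality is redundant on both sides: the second half of such a path from $(i,j)$ to $\l_s$ lies in the region of coordinates that vanish on $F_{k-1}(\l)$, so the constraint reduces to the shorter $\l_r\to\l_{i+j}$ constraint. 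For Dyck paths not passing through $(i,j)$ the right-hand side is unchanged; these match $\mu_r(t)-\mu_s(t)$ directly when $t\le\l_s$, and for $t>\l_s$ (only possible when $j<s<i+j$) the $F_k(\mu(t))$ constraint is tighter, but on the slice side the corresponding tightening is supplied by an appropriate sliced $\l_r\to\l_{i+j}$ constraint whose effective left-hand side coincides with that of $\l_r\to\l_s$ after the coordinates with $m'>j$ are zeroed out.

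The convex hull statement then follows at once: the slices $F_{k-1}(\l)\cap\{a_k=s\}$ for $s\in[0,\l_j-\l_{i+j}]$ exhaust $F_{k-1}(\l)$, and any convex body is the convex hull of its parallel slices.

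The main obstacle is the combinatorial redundancy check---both for ``long'' Dyck paths ($s>i+j$) and for ``intermediate'' paths ($j<s<i+j$) with $t>\l_s$. In each case one must verify that the inequality appearing on one side is either matched by a corresponding inequality on the other side or becomes redundant, with the redundancy arising from the interaction between sliced $\l_r\to\l_{i+j}$ constraints through $(i,j)$ and the vanishing of coordinates $u^{l'}_{m'}$ with $m'>j$ on $F_{k-1}(\l)$. Once these redundancies are worked out, the matching of facet-defining inequalities of both polytopes is routine.
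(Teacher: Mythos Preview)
Your proposal is correct and follows essentially the same approach as the paper. You define exactly the same path $\mu(t)$ (your case $l=i+j$ is just the paper's $\max\{\lambda_l,t\}$ evaluated on the range $t\ge\lambda_{i+j}$), and the paper's entire proof after exhibiting $\mu(t)$ is the single sentence ``the lemma now follows immediately from the definitions of $F_k(\lambda)$ and $FFLV(\lambda)$''; your Dyck-path case analysis is precisely the unwinding of that claim.

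One wording issue: the sentence ``a Dyck path from $\lambda_r$ to $\lambda_s$ passes through the node $(i,j)$ if and only if $r\le j$ and $s\ge i+j$'' is not literally true---those inequalities are the condition for \emph{some} Dyck path from $\lambda_r$ to $\lambda_s$ to pass through $u^i_j$, not for every such path to do so. Your subsequent case split (paths through $(i,j)$ versus not) is the right organization, so this is a matter of phrasing rather than a genuine gap; just rewrite that line so it does not read as a claim about all paths.
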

\begin{proof}
Define $\mu(t)=(\mu_1(t),\ldots,\mu_n(t))$ as follows
$$\mu_l(t)=\left\{
\begin{array}{ll}
\max\{\lambda_l,t\}&
\mbox{ if } j< l\le i+j\\
\l_l&
\mbox{ otherwise }\\
\end{array}
\right.$$
In particular, $\l=\mu(\l_{i+j})$, and every $\mu_l(t)$ is a piecewise linear concave function
of $t$.
The lemma now follows immediately from the definitions of $F_k(\l)$ and $FFLV(\l)$.
\end{proof}
In particular, $F_{k-1}(\l)$ fibers over the segment $[0,\l_j-\l_{i+j}]$, and the
fiber polytope is analogous to $F_k(\l)$ for strictly dominant $\lambda$.
\begin{lemma}\label{l.AG}
Take $\mu(t)$ as in the proof of Lemma \ref{l.CG}.
Then
$$(t-\l_{i+j}) e_{k}+\Delta_{v_{k}}(Y_{k},L_{\mu(t)}|_{Y_{k}})\subset
\Delta_{v_{k-1}}(Y_{k-1},L_\l|_{Y_{k-1}})\cap\{a_k=t-\l_{i+j}\}
$$
for all integer $t\in[\l_{i+j},\l_j]$.
In particular,
$$
\conv\{(t-\l_{i+j}) e_{k}+\Delta_{v_{k}}(Y_{k},L_{\mu(t)}|_{Y_{k}}) \ |\ \l_{i+j}\le t \le \l_j,
\ t\in\Z \}
\subset\Delta_{v_{k-1}}(Y_{k-1},L_\l|_{Y_{k-1}}).
$$
\end{lemma}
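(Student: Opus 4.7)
The plan is to use Lemma \ref{l.Chevalley} to rewrite $L_\l|_{Y_{k-1}}$ as $L_{\mu(t)}|_{Y_{k-1}}\otimes\Oc(Y_k)^{t-\l_{i+j}}\otimes\Oc(E)$ for a suitable effective divisor $E$, and then to shift sections of $L_{\mu(t)}|_{Y_k}$ into $L_\l|_{Y_{k-1}}$ by multiplication by $f^{t-\l_{i+j}}$, where $f$ is a local equation for $Y_k$ inside $Y_{k-1}$ (so that $v_{k-1}(f)=e_k$).

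First I would apply Lemma \ref{l.Chevalley} with $w=w_{k-1}$, whose terminal-subword structure is precisely $(s_i\ldots s_1)(s_{n-j+1}\ldots s_1)\ldots(s_{n-1}\ldots s_1)$ for the pair $(i,j)$ with $a_k=u^i_j$. One checks that the transposition $(j,i+j)$ acting on $w_{k-1}$ on the right yields $w_k$, so $X_{w_{k-1}(j,i+j)}=X_{w_k}$. Comparing the Chevalley decompositions of $L_\l|_{Y_{k-1}}$ and $L_{\mu(t)}|_{Y_{k-1}}$ term by term, using $\mu_l(t)=\max\{\l_l,t\}$ for $j<l\le i+j$ and $\mu_l(t)=\l_l$ otherwise, all coefficients outside the middle group $\{(j,j+l):1\le l\le i\}$ agree, the coefficient of $X_{w_k}$ drops by exactly $t-\l_{i+j}$, and the other middle-group coefficients drop by the \emph{nonnegative} amount $\max\{0,t-\l_{j+l}\}$. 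The leftover Schubert divisors assemble into the advertised effective divisor $E$.

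Next, by Lemma \ref{l.effective} applied with this effective $E$,
\begin{equation*}
\Delta_{v_{k-1}}\bigl(Y_{k-1},L_{\mu(t)}|_{Y_{k-1}}\otimes\Oc(Y_k)^{t-\l_{i+j}}\bigr)\subset\Delta_{v_{k-1}}(Y_{k-1},L_\l|_{Y_{k-1}}),
\end{equation*}
so it is enough to show $(t-\l_{i+j})e_k+\Delta_{v_k}(Y_k,L_{\mu(t)}|_{Y_k})$ is contained in the left-hand body. Applying Lemma \ref{l.projective} to $Y_{k-1}$ embedded via a high Veronese of $L_{\mu(t)}$ identifies $\Delta_{v_k}(Y_k,L_{\mu(t)}|_{Y_k})$ with the slice $\Delta_{v_{k-1}}(Y_{k-1},L_{\mu(t)}|_{Y_{k-1}})\cap\{a_k=0\}$. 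Multiplication by $f^{N(t-\l_{i+j})}$ then injects $H^0(Y_{k-1},L_{\mu(t)}^{\otimes N}|_{Y_{k-1}})$ into $H^0(Y_{k-1},L_{\mu(t)}^{\otimes N}\otimes\Oc(Y_k)^{N(t-\l_{i+j})})$, shifting the $a_k$-coordinate by $N(t-\l_{i+j})$ and leaving the remaining coordinates untouched; this delivers the required inclusion. The ``in particular'' assertion follows from convexity of $\Delta_{v_{k-1}}(Y_{k-1},L_\l|_{Y_{k-1}})$.

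The main obstacle is the Chevalley-formula bookkeeping in the first step: one must correctly identify the transposition producing $X_{w_k}$ as a Schubert divisor of $X_{w_{k-1}}$ (a manifestation of the special features of $\underline{w_0}$ already highlighted in the paper) and verify that \emph{all} other middle-group exponents drop by a nonnegative amount, which is what renders $E$ effective and lets Lemma \ref{l.effective} close the argument. Once this combinatorial fact is in place, the remaining two steps are essentially mechanical applications of Lemmas \ref{l.effective} and \ref{l.projective}.
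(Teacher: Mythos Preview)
Your proof is correct and follows essentially the same route as the paper: Chevalley bookkeeping (Lemma~\ref{l.Chevalley}) to isolate the coefficient $t-\l_{i+j}$ and an effective remainder, followed by Lemmas~\ref{l.effective} and~\ref{l.projective}. The one cosmetic difference is that the paper keeps track of the specific translate $s_iY_k$ (rather than $Y_k$ itself) that the Chevalley decomposition actually produces inside $Y_{k-1}$ and realizes the shift by $(t-\l_{i+j})e_k$ via the principal divisor $s_iY_k-Y_k=\operatorname{div}(y_k)$, whereas you multiply directly by the canonical section of $\Oc(Y_k)$; since these two divisors are linearly equivalent on $Y_{k-1}$, the arguments amount to the same thing.
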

\begin{proof}
By definition, $Y_k$ and $Y_{k-1}$ are translates of the Schubert varieties $X_{w_k}$ and
$X_{w_{k-1}}$,
respectively, where $w_k=(s_{i-1}\ldots s_1)(s_{n-j+1}\ldots s_1)\ldots(s_{n-1}\ldots s_1)$ and
$w_{k-1}=s_iw_k$.
Put $\s=t-\l_{i+j}$.
It is easy to check using Lemma \ref{l.Chevalley} that
$$L_\l|_{Y_{k-1}}\otimes\Oc(-\s Y_k)=L_{\mu(t)}|_{Y_{k-1}}\otimes\Oc(\s(s_i Y_k -Y_k))\otimes E(\s)$$
for an effective Cartier divisor $E(\s)$ on $Y_{k-1}$.
Indeed,
$E(\s)=L_{(\l-\mu(t))}|_{Y_{k-1}}\otimes\Oc(-\s s_i Y_k )$
is a translate of the following divisor on $X_{w_{k-1}}$:
$$\bigotimes_{l=1}^{i-1}\Oc(X_{w(j~l+j)})^{\max\{0,t-\l_{l+j}\}}.$$
Note that $\Delta_{v_{k-1}}(Y_{k-1},L_{\mu(t)}|_{Y_{k-1}}\otimes\Oc(\s(s_i Y_k -Y_k)))=
\s e_{k}+\Delta_{v_{k-1}}(Y_{k-1}, L_{\mu(t)}|_{Y_{k-1}})$ since $s_i Y_k -Y_k$ is the divisor
of the rational function $y_k$.
Applying Lemma \ref{l.effective} to $Y_{k-1}$, $L_{\mu(t)}|_{Y_{k-1}}\otimes\Oc(\s(s_i Y_k -Y_k))$
and $E(\s)$ we get
$$\s e_{k}+\Delta_{v_{k-1}}(Y_{k-1}, L_{\mu(t)}|_{Y_{k-1}})\subset\Delta_{v_{k-1}}(Y_{k-1},
L_\l|_{Y_{k-1}}\otimes\Oc(-\s Y_k)).$$
Intersecting both sides with the hyperplane $\{a_k=\s\}$ yields
$$\s e_k+\Delta_{v_{k-1}}(Y_{k-1}, L_{\mu(t)}|_{Y_{k-1}})\cap\{a_k=0\}\subset\Delta_{v_{k-1}}(Y_{k-1},
L_\l|_{Y_{k-1}}\otimes\Oc(-\s Y_k))\cap\{a_k=\s\}.$$
Since $L_{\mu(t)}$ is semiample we can apply Lemma \ref{l.projective}
and get that
$$\Delta_{v_{k}}(Y_k,L_{\mu(t)}|_{Y_{k}}) =\Delta_{v_{k-1}}(Y_{k-1}, L_{\mu(t)}|_{Y_{k-1}})\cap \{a_k=0\}.$$
It follows that
$$\s e_k+ \Delta_{v_{k}}(Y_{k}, L_{\mu(t)}|_{Y_{k}})\subset\Delta_{v_{k-1}}(Y_{k-1},
L_\l|_{Y_{k-1}}\otimes\Oc(-\s Y_k))\cap \{a_k=\s\}.$$
It remains to note that $\Delta_{v_{k-1}}(Y_{k-1},
L_\l|_{Y_{k-1}}\otimes\Oc(-\s Y_k))\subset \Delta_{v_{k-1}}(Y_{k-1},
L_\l|_{Y_{k-1}})$ by Lemma \ref{l.effective}.
\end{proof}
We are now ready to prove Theorem \ref{t.gmain}.
\begin{proof}[Proof of Theorem \ref{t.gmain}] Let us first prove that
$F_{k}(\l)\subset\Delta_{v_{k}}(Y_{k},L_\l|_{Y_{k}})$ for all dominant $\l$
by backward induction on $k$.
For $k=d$, we have that both convex bodies coincide with the origin in $\R^d$.
Suppose the inclusion holds for $k$.
We now prove it for $k-1$.
By Lemma \ref{l.CG}
$$F_{k-1}(\l)=\conv\{(t-\l_{i+j}) e_{k}+F_k(\mu(t)) \ |\ \l_{i+j}\le t \le \l_j\}.$$
Moreover, when taking the convex hull it is enough to consider only integer values of $t$,
since $\mu(t)$ is linear at all non-integer points.
Using the induction hypothesis $F_k(\mu(t))\subset\Delta_{v_k}(Y_k,L_{\mu(t)}|_{Y_k})$
we get that
$$F_{k-1}(\l)\subset\conv\{(t-\l_{i+j}) e_{k}+\Delta_{v_{k}}(Y_{k},L_{\mu(t)}|_{Y_{k}}) \ |\
\l_{i+j}\le t \le \l_j,\ t\in\Z \}.$$
Hence, $F_{k-1}(\l)\subset\Delta_{v_{k-1}}(Y_{k-1},L_\l|_{Y_{k-1}})$ by Lemma \ref{l.AG}.

Finally, for $k=0$ we get $F_0(\l)\subset\Delta_{v}(GL_n/B,L_\l)$.
Since both convex bodies have the same volume they must coincide.
Here we use that by Theorem \ref{t.comparison} the volume of $F_0(\l)=FFLV(\l)$
coincides with the volume of the Gelfand--Zetlin polytope $GZ(\l)$.
Hence, inclusions $F_{k}(\l)\subset\Delta_{v_{k}}(Y_{k},L_\l|_{Y_{k}})$ are
equalities for all $k$.
\end{proof}
\begin{remark} Results of Section \ref{s.comparison} (see Theorem \ref{t.comparison} and
Remark \ref{r.flag_GZ}) imply that
the number of integer points in $F_{k}(\l)$ (and hence, in the Newton--Okounkov polytope
$\Delta_{v_{k}}(Y_{k},L_\l|_{Y_{k}})$) is equal to the dimension of the Demazure module
$H^0(Y_{k},L_\l|_{Y_{k}})$ for all $k=0,\ldots,d$ and dominant $\l$.
\end{remark}
To illustrate the proof of Theorem \ref{t.gmain} consider the simplest meaningful example.
\begin{example} Let $k=d-1$, i.e., $w_k=s_1$ and $w_{k-1}=s_2s_1$.
Then $Y_{k-1}=\hat\P^2$ is the blow up of $\P^2$ at one point, and $Y_{k}=\P^1$ is embedded into
$Y_{k-1}$ as one of the fibers of the $\P^1$-bundle $\hat\P^2\to\P^1$.
The Picard group of $\hat\P^2$ is spanned by $\Oc(Y_k)$ and $\Oc(E)$ where $E\subset\hat\P^2$
is the exceptional divisor.
Note that $\Oc(E)^a\otimes\Oc(Y_k)^b$ is semiample iff $0\le a\le b$.
We have
$$L_\l|_{Y_{k-1}}=\Oc(E)^{\l_1-\l_2}\otimes\Oc(Y_k)^{\l_1-\l_3}.$$
Hence, the line bundle $L_\l|_{Y_{k-1}}\otimes\Oc(-(t-\l_3) Y_k))$ is no longer semiample if
$\l_2<t\le\l_1$.
However, it has the same global sections (modulo multiplication by $y_{k}^{t-\l_3}$) as the
semiample bundle $L_{\mu(t)}=\Oc(E)^{\l_1-t}\otimes\Oc(Y_k)^{\l_1-t}$.
Hence, $L_{\mu(t)}$ can be used instead of $L_\l|_{Y_{k-1}}\otimes\Oc(-(t-\l_3) Y_k))$ when
computing $\Delta_{v_{k-1}}(L_\l|_{Y_{k-1}},Y_{k-1})$.
Figure 2 shows the Newton--Okounkov polygons of $L_\l|_{Y_{k-1}}$ (trapezoid) and
$L_{\mu(t)}|_{Y_{k-1}}$ (triangle), which are just Newton polygons since $Y_{k-1}$ is toric.
\end{example}
\begin{figure}
\includegraphics[width=7cm]{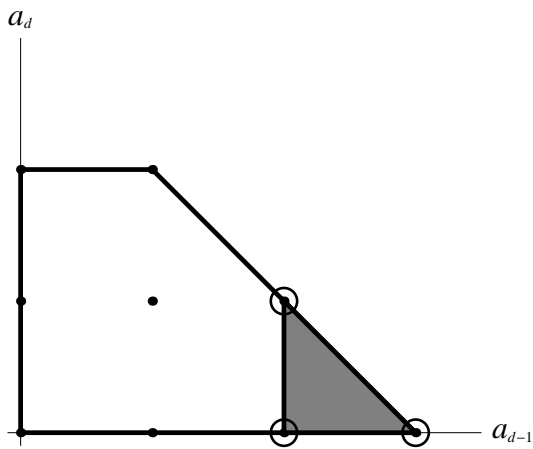}
\caption{Newton polygons of $L_\l|_{Y_{d-2}}$ and $L_{\mu(t)}|_{Y_{d-2}}$ for $d=3$, $\l=(3,1,0)$ and $t=2$.}
\end{figure}

\section{Comparison of Gelfand--Zetlin polytopes and Feigin--Fourier--Littelmann--Vinberg polytopes}
\label{s.comparison}
We start with an elementary construction of polytopes fibered over a segment.
Then we apply this construction to get the Gelfand--Zetlin and Feigin--Fourier--Littelmann--Vinberg
polytopes in a uniform way.
\subsection{Construction with fiber polytope}\label{ss.fiber}
Let $P\subset\R^l$ be a convex polytope.
The set of linear functionals, whose restrictions to $P$ attain their maximal values at a face
$F\subset P$, form a cone $C_F$; the {\em normal fan} of $P$ is defined as the
set of cones $C_F$ corresponding to all faces $F\subseteq Q$.
We say that a polytope $Q\subset\R^l$ is {\em subordinate} to $P$ if the normal fan of $P$ is
a subdivision of the normal fan of $Q$.
Note that the set of all polytopes subordinate to $P$ forms a semigroup under the Minkowski sum.
Denote this semigroup by $S_P$.

Let $\mu(t)$ be a piecewise-linear continuous function from a segment $I\subset \R$ to $S_P$.
We say that $\mu(t)$ is {\em convex} if
$$\frac{\mu(t_1)+\mu(t_2)}{2}\subset \mu\left(\frac{t_1+t_2}{2}\right)$$
for all $t_1,t_2\in I$.
In other words, the set
$$P_\mu:=\bigcup_{t\in I}\mu(t)\times \{t\}\subset \R^l\times \R=\R^{l+1}$$
is a convex polytope.
In this case, $P_\mu$ fibers over $I$ and the fiber polytope is subordinate to $P$.

Suppose now that $\mu'(t)$ is a convex function from $I$ to $S_Q$ for a convex polytope
$Q\subset\R^l$.
If the polytopes $\mu(t)$ and $\mu'(t)$ have the same Ehrhart polynomials for all $t\in I$
then obviously so do $P_\mu$ and $P_{\mu'}$.
The simplest example is when $P=Q$ and $\mu'(t)$ is a parallel translate of $\mu(t)$.
In this case, $P_\mu$ and $P_{\mu'}$ also have the same fiber polytope but might be combinatorially different
even for quite simple $\mu(t)$ and $\mu'(t)$ (see Example \ref{e.FvsG}).
\subsection{$GZ(\l)$ vs $FFLV(\l)$}
We now show that both $GZ(\l)$ and $FFLV(\l)$ can be obtained inductively from a point
using the above construction.
Recall that the Gelfand--Zetlin polytope $GZ(\l)\subset\R^d$ is defined by the following
inequalities
$$
\begin{array}{cccccccccc}
\l_1&       & \l_2    &         &\l_3          &    &\ldots    & &       &\l_n   \\
    &z^1_1&         &z^1_2  &         & \ldots   &       &  &z^1_{n-1}&       \\
    &       &z^2_1 &       &  \ldots &   &        &z^2_{n-2}&         &       \\
    &       &       &  \ddots   & &  \ddots   &      &         &         &       \\
    &       &       &  &z^{n-2}_1&     &  z^{n-2}_2 &        &         &       \\
    &       &         &    &     &z^{n-1}_1&   &              &         &       \\
\end{array}
$$
where the notation
$$
 \begin{array}{ccc}
  a &  &b \\
   & c &
 \end{array}
 $$
means $a\ge c\ge b$.
Let $G_k(\l)$ be the face of the Gelfand--Zetlin polytope $GZ(\l)$ given by
the equations $z^l_m=z^{l-1}_{m+1}$ for all pairs $(l,m)$ such that either $m>j$, or $m=j$ and
$l\ge i$ (we put $z^0_m=\l_m$).

\begin{remark}\label{r.flag_GZ}
In \cite[Theorem 3.4]{K}, there is an inductive construction of the Gelfand--Zetlin polytope
via convex geometric Demazure operators.
The flag of faces
$$G_d(\l)\subset G_{d-1}(\l)\subset G_{d-2}(\l)\subset\ldots\subset G_1(\l)\subset GZ(\l)=:G_0(\l).$$
is exactly the flag used in this construction.
In particular, by \cite[Corollary 4.5]{K} the number of integer points in $G_k$ is equal
to the dimension of the Demazure module $H^0(Y_{k},L_\l|_{Y_{k}})$ for all $k=0,\ldots,d$ and
dominant $\l$.
\end{remark}

\begin{lemma}\label{l.CG_GZ}
Take $\mu(t)$ as in the proof of Lemma \ref{l.AG}.
There exists a path $z(t)\in\R^d$
such that
$$G_{k-1}(\l)\cap\{z^i_j=t\}=z(t)+G_k(\mu(t))$$
for all integer $t\in[\l_{i+j},\l_j]$.
In particular,
$$G_{k-1}(\l)=\conv\{z(t)+G_k(\mu(t))\ |\ \l_{i+j}\le t \le \l_j\}.$$
\end{lemma}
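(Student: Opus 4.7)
The plan is to exhibit a specific path $z(t)\in\R^d$ and then verify the slicing identity coordinate by coordinate. I would define $z(t)$ by setting its $z^l_m$-component equal to $\lambda_{l+m}-\mu_{l+m}(t)$ whenever $(l,m)\in\mathrm{fix}(G_{k-1})=\{(l,m):m>j\text{ or }(m=j,\ l\ge i+1)\}$, and equal to $0$ on all other coordinates (in particular at $(i,j)$ and at every free coordinate of $G_k$). Since $\mu_r(t)=\lambda_r$ unless $r\in(j,i+j]$, the shift is supported on those $(l,m)\in\mathrm{fix}(G_{k-1})$ with $l+m\in(j,i+j]$, and equals $\lambda_{l+m}-\max\{\lambda_{l+m},t\}$ there.

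With this choice, the fixed coordinates match after the shift: the equalities defining $G_k(\mu(t))$ propagate via chains $z^l_m=z^{l-1}_{m+1}=\ldots=z^0_{l+m}=\mu_{l+m}(t)$, which after adding $z(t)$ become the chain values $\lambda_{l+m}$ defining $G_{k-1}(\lambda)$; the additional chain in $G_k$ emanating from $(i,j)$ has value $\mu_{i+j}(t)=t$ (using $t\ge\lambda_{i+j}$), is left unshifted, and thus matches the slicing $\{z^i_j=t\}$. The free coordinates, which coincide in both polytopes, are unchanged by $z(t)$. It then remains to check that the GZ inequalities for $\lambda$ on the shifted point are equivalent to the GZ inequalities for $\mu(t)$ on the original point. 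An inequality involving two free neighbors, two neighbors in $\mathrm{fix}(G_{k-1})$, or one neighbor equal to $(i,j)$ is preserved literally. The only subtle case is a free coordinate $z^l_j$ with $l<i$, whose direct lower bound in $G_k(\mu(t))$ is $\mu_{l+j}(t)=\max\{\lambda_{l+j},t\}$, while in $G_{k-1}(\lambda)$ the direct $\lambda$-bound gives only $\lambda_{l+j}$. The missing part $z^l_j\ge t$ is supplied in $G_{k-1}(\lambda)\cap\{z^i_j=t\}$ by the chain of column-$j$ inequalities $z^l_j\ge z^{l+1}_j\ge\ldots\ge z^i_j=t$, so both polytopes impose the same effective lower bound $\max\{\lambda_{l+j},t\}$; this is the reason for the recipe $\mu_r(t):=\max\{\lambda_r,t\}$, which absorbs the slicing constraint into the top row of the GZ table.

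The convex hull identity is then immediate: $G_{k-1}(\lambda)$ is convex and is the union of its hyperplane slices $\{z^i_j=t\}$ as $t$ ranges over $[\lambda_{i+j},\lambda_j]$, the endpoints being the extreme values of $z^i_j$ on $G_{k-1}(\lambda)$ (minimum $\lambda_{i+j}$ coming from $z^i_j\ge z^{i-1}_{j+1}=\lambda_{i+j}$, maximum $\lambda_j$ from $z^i_j\le z^{i-1}_j\le\ldots\le\lambda_j$). The main obstacle in this plan is the inequality-matching step: recognizing that a direct $\mu$-bound and an indirect chain-bound produce the same half-space is straightforward in isolation, but verifying it uniformly across all five local GZ inequalities at each entry of the table requires patient bookkeeping.
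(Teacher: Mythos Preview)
Your proof is correct and is essentially the paper's approach: give $z(t)$ explicitly and verify the slicing identity by a direct comparison of the GZ inequalities (the paper merely says the statement ``follows by direct calculation,'' while you isolate the only nontrivial case, the lower bound on a free $z^l_j$ with $l<i$, and show it is matched on the $G_{k-1}$ side by the chain $z^l_j\ge\cdots\ge z^i_j=t$). Note that your shift $z^l_m(t)=\lambda_{l+m}-\mu_{l+m}(t)$ has the opposite sign to the formula printed in the paper; yours is the one consistent with the identity $G_{k-1}(\lambda)\cap\{z^i_j=t\}=z(t)+G_k(\mu(t))$ as stated, since a fixed entry in $G_k(\mu(t))$ equals $\mu_{l+m}(t)\ge\lambda_{l+m}$ and must be shifted \emph{down} to $\lambda_{l+m}$.
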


\begin{proof}
Define the coordinates $z^l_m(t)$ of $z(t)\in\R^d$ as follows:
$$z^l_m(t)=\left\{
\begin{array}{ll}
(t-\l_{i+j})&\mbox{ if } m> j, l+m=i+j, \ \l_{i+j}\le t\\
(t-\l_{i+j-1})&\mbox{ if } m> j, l+m=i+j-1, \ \l_{i+j-1}\le t\\
\vdots&\vdots\\
(t-\l_{j+2})&\mbox{ if } m> j, l+m=j+2, \ \l_{j+2}\le t\\
0&\mbox{ otherwise }\\
\end{array}
\right.
.$$
In particular, $z(t)=0$ if $i=1$.
The statement of the lemma now follows by direct calculation from the definition of $GZ(\l)$
and $G_k(\l)$.
\end{proof}

Lemmas \ref{l.CG} and \ref{l.CG_GZ} together with the backward induction on $k$ immediately yield an
elementary proof of the following theorem.
\begin{thm}\label{t.comparison}
Polytopes $F_k(\l)$ and $G_k(\l)$ have the same Ehrhart polynomial for all $k=0$,\ldots, $d$.
In particular, Gelfand--Zetlin polytope $GZ(\lambda)$ and Feigin--Fourier--Littelmann--Vinberg
polytope $FFLV(\l)$ have the same Ehrhart polynomial.
\end{thm}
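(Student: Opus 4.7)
The plan is to prove Theorem \ref{t.comparison} by backward induction on $k$, using the two fiber-polytope descriptions already in hand. The base case $k=d$ is immediate, since both $F_d(\lambda)$ and $G_d(\lambda)$ are single points and therefore trivially share the same (constant) Ehrhart polynomial.

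For the inductive step, I assume that $F_k(\mu)$ and $G_k(\mu)$ have the same Ehrhart polynomial for every dominant weight $\mu$, and I want to deduce the same equality at level $k-1$. The crucial observation is that Lemmas \ref{l.CG} and \ref{l.CG_GZ} use the \emph{same} piecewise-linear path $\mu(t)$ of dominant weights. Lemma \ref{l.CG} exhibits $F_{k-1}(\lambda)$ as a convex polytope fibered over the segment $[\lambda_{i+j},\lambda_j]$, with fiber above $t$ equal to a translate of $F_k(\mu(t))$. Lemma \ref{l.CG_GZ} does the same for $G_{k-1}(\lambda)$, with fiber above $t$ equal to a translate of $G_k(\mu(t))$. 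In the language of Section \ref{ss.fiber}, both $F_{k-1}(\lambda)$ and $G_{k-1}(\lambda)$ are of the form $P_\nu$ for convex piecewise-linear paths $\nu$ of polytopes parametrized by the same interval.

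Now I invoke the elementary principle from Section \ref{ss.fiber}: if two convex piecewise-linear paths of polytopes, parametrized by the same segment, have pointwise equal Ehrhart polynomials, then the total fiber polytopes have equal Ehrhart polynomials. This is because the Ehrhart count at dilation $N$ decomposes as a sum, over integer heights in the dilated segment, of the integer-point counts of the (dilated) fibers, and these summands depend only on the Ehrhart data of the fibers. By the inductive hypothesis applied to $\mu=\mu(t)$ for each integer $t\in[\lambda_{i+j},\lambda_j]$, the fibers $F_k(\mu(t))$ and $G_k(\mu(t))$ have equal Ehrhart polynomials, so $F_{k-1}(\lambda)$ and $G_{k-1}(\lambda)$ do as well. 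Specialising at $k=0$ yields the asserted equality of Ehrhart polynomials of $FFLV(\lambda)=F_0(\lambda)$ and $GZ(\lambda)=G_0(\lambda)$.

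The main technical point to verify, rather than a genuine obstacle, is that the paths $\mu(t)$ and the associated translations in Lemmas \ref{l.CG} and \ref{l.CG_GZ} really do produce convex piecewise-linear paths of subordinate polytopes in the sense of Section \ref{ss.fiber}, so that the fiber-polytope formalism applies. This is immediate from the explicit formulas for $\mu(t)$ and $z(t)$, which are piecewise linear with finitely many breakpoints at the values $t=\lambda_l$. Once this is recorded, the induction proceeds mechanically and no further computation is required.
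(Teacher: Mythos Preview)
Your proposal is correct and follows essentially the same approach as the paper: backward induction on $k$ starting from $k=d$, using Lemmas \ref{l.CG} and \ref{l.CG_GZ} together with the fiber-polytope principle of Section \ref{ss.fiber} (that equal Ehrhart data of all slices forces equal Ehrhart data of the totals). The paper records this proof in a single sentence; your version spells out the slicing-by-integer-heights mechanism and the role of the integral translation vectors, which is a reasonable expansion but not a different argument.
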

The last statement of the theorem also follows from \cite{FFL}.
The first elementary proof of this statement was given in \cite{ABS} using a different approach.

Lemmas \ref{l.CG} and \ref{l.CG_GZ} imply that both $FFLV(\l)$ and $GZ(\l)$ can be obtained
inductively from a point by iterating the construction of Section \ref{ss.fiber}.
Note that both $F_{k-1}(\l)$ and $G_{k-1}(\l)$ fiber over a segment of length $\l_j-\l_{i+j}$,
and fibers are equal (up to a parallel translation) to $F_k(\mu(t))$ and $G_k(\mu(t))$,
respectively, for the same piecewise linear function $\mu(t)$ on the segment.
The only difference between these two cases is the presence of the shift vector $z(t)$ in the
second case.

\begin{example}{cf. \cite{Fo}} \label{e.FvsG}
For $n=3$, $k=0$, \ldots, $3$, and $n=4$, $k=2$, \ldots, $6$,
there exists a unimodular change of coordinates that maps $F_k$ to $G_k$.
Let $n=4$, and $k=1$.
Then $F_k$ provides the minimal example when $F_k$ is not combinatorially equivalent to
$G_k$.

We now illustrate how to obtain the inequalities defining  $F_1$ from those of $F_2$
using Lemma \ref{l.CG} or equivalently the construction of Section \ref{ss.fiber}
(and not the definition of $F_1$).
For $k=2$, we have $i=j=2$, and
$$
\mu(t)=\left\{
\begin{array}{ll}
(\l_1,\l_2,\l_3,t)&
\mbox{ if } \l_4\le t\le \l_3\\
(\l_1,\l_2,\: \ t,t)&
\mbox{ if } \l_3\le t\le \l_2\\
\end{array}
\right.
.
$$
By Example \ref{e.FFLV} the inequalities defining $F_2$ are
$$0\le u^1_1\le\l_1-\l_2; \quad 0\le u^1_2\le\l_2-\l_3; \quad
0\le u^2_1, \ u^3_1;$$
$$u^1_1+u^2_1+u^1_2\le\l_1-\l_3; \quad u^1_1+u^2_1+u^3_1\le\l_1-\l_4.$$
Put $u^2_2:=t-\l_4$.
Using the last statement of Lemma \ref{l.CG} as a definition of $F_1$, we get that $F_1$
is defined by  inequalities:
$$0\le u^1_1\le\l_1-\l_2; \quad 0\le u^1_2\le\l_2-\mu_3(u^2_2+\l_4); \quad
0\le u^2_1, \ u^3_1;$$
$$u^1_1+u^2_1+u^1_2\le\l_1-\mu_3(u^2_2+\l_4); \quad u^1_1+u^2_1+u^3_1\le\l_1-(u^2_2+\l_4);$$
$$0\le u^2_2\le\l_2-\l_4.$$
Using that $\mu_3(t)=\max\{\l_3,t\}$ and eliminating redundant inequalities we get
$$0\le u^1_1\le\l_1-\l_2; \quad 0\le u^1_2\le\l_2-\l_3; \quad u^1_2+u^2_2\le \l_2-\l_4; \quad
0\le u^2_1, \ u^3_1, \ u^2_2;$$
$$u^1_1+u^2_1+u^1_2\le\l_1-\l_3; \quad u^1_1+u^2_1+u^1_2+u^2_2\le\l_1-\l_4;$$
$$u^1_1+u^2_1+u^3_1+u^2_2\le\l_1-\l_4.$$
Similarly, one can restore $G_1$ from $G_2$ and check that there are only 10 inequalities for $G_1$.
\end{example}

\end{document}